\font\smallit=cmti10
\newcommand{\bburl}[1]{\textcolor{blue}{\url{#1}}}
\renewcommand\section{\@startsection {section}{1}{\z@}
{-30pt \@plus -1ex \@minus -.2ex}
{2.3ex \@plus.2ex}
{\normalfont\normalsize\bfseries\boldmath}}
\renewcommand\subsection{\@startsection{subsection}{2}{\z@}
{-3.25ex\@plus -1ex \@minus -.2ex}
{1.5ex \@plus .2ex}
{\normalfont\normalsize\bfseries\boldmath}}
\renewcommand{\@seccntformat}[1]{\csname the#1\endcsname. }
\newtheorem{thm}{Theorem}[section]
\newtheorem{conj}[thm]{Conjecture}
\newtheorem{cor}[thm]{Corollary}
\newtheorem{lem}[thm]{Lemma}
\newtheorem{prop}[thm]{Proposition}
\newtheorem{exa}[thm]{Example}
\begin{document}

\begin{center}
\uppercase{\bf Union of Two Arithmetic Progressions with the Same Common Difference Is Not Sum-dominant}
\vskip 20pt
{\bf H\`ung Vi\d{\^e}t Chu}\\

\end{center}
\vskip 20pt
\centerline{\smallit Received: , Revised: , Accepted: , Published: } 
\vskip 30pt

\centerline{\bf Abstract}
\noindent Given a finite set $A\subseteq \mathbb{N}$, define the sum set
$$A+A = \{a_i+a_j\mid a_i,a_j\in A\}$$ and the difference set 
$$A-A = \{a_i-a_j\mid a_i,a_j\in A\}.$$ The set $A$ is said to be sum-dominant
if $|A+A|>|A-A|$. We prove the following results
\begin{enumerate}
    \item The union of two arithmetic progressions (with the same common difference) is not sum-dominant. This result partially proves a conjecture proposed by the author in a previous paper; that is, the union of any two arbitrary arithmetic progressions is not sum-dominant. 
    \item Hegarty proved that a sum-dominant set must have at least $8$ elements with computers' help. The author of the current paper provided a human-verifiable proof that a sum-dominant set must have at least $7$ elements. A natural question is about the largest cardinality of sum-dominant subsets of an arithmetic progression. 
    Fix $n\ge 16$. Let $N$ be the cardinality of the largest sum-dominant subset(s) of $\{0,1,\ldots,n-1\}$ that contain(s) $0$ and $n-1$. Then $n-7\le N\le n-4$; that is, from an arithmetic progression of length $n\ge 16$, we need to discard at least $4$ and at most $7$ elements (in a clever way) to have the largest sum-dominant set(s). 
    \item Let $R\in \mathbb{N}$ have the property that for all $r\ge R$, $\{1,2,\ldots,r\}$ can be partitioned into $3$ sum-dominant subsets, while $\{1,2,\ldots,R-1\}$ cannot. Then $24\le R\le 145$. This result answers a question by the author et al. in another paper on whether we can find a stricter upper bound for $R$. 
\end{enumerate}
\noindent 2010 {\it Mathematics Subject Classification}:
Primary 11P99.

\noindent \emph{Keywords: }sum-dominant set, MSTD set, arithmetic progression.
\pagestyle{myheadings} 
\thispagestyle{empty} 
\baselineskip=12.875pt 
\vskip 30pt
\section{Introduction}
\subsection{Background and main results}
Given a finite set $A\subseteq \mathbb{N}$, define $A+A = \{a_i + a_j\,|\, a_i, a_j\in A\}$ and $A-A = \{a_i - a_j \,|\, a_i, a_j\in A\}$. The set $A$ is said to be 
\begin{itemize}
    \item \textit{sum-dominant}, if $|A+A|>|A-A|$;
    \item \textit{balanced}, if $|A+A| = |A-A|$; and
    \item \textit{difference-dominant}, if $|A+A|<|A-A|$.
\end{itemize}
Because addition is commutative, while subtraction is not, sum-dominant sets are very rare. However, it was first proved by Martin and O'Bryant \cite{MO} that as $n\rightarrow 
\infty$, the proportion of sum-dominant subsets of $\{0,1,2,\ldots,n-1\}$ is bounded below by a positive constant (about $2\cdot 10^{-7}$), which was later improved by Zhao \cite{Zh2} to about $4\cdot 10^{-4}$. However, these works used the probabilistic method and did not give explicit constructions of sum-dominant sets. Later, Miller et al. \cite{MOS} constructed a family of density $\Theta(1/n^4)$\footnote{A more refined analysis improves the bound to $\Theta(1/n^2)$ \cite{ILMZ2}.} and Zhao \cite{Zh1} gave a family of density $\Theta(1/n)$. The last few years have seen an explosion of papers exploring properties of sum-dominant sets: see \cite{FP, ILMZ1, Ma, Na2, Ru1, Ru2, Ru3} for
history and overview, \cite{He,MS,MOS,Na2,Zh1} for explicit constructions, \cite{CLMS2, HM, MO, Zh2} for positive lower bounds for the percentage of
sum-dominant sets, \cite{ILMZ2,MPR} for generalized sum-dominant sets,
and \cite{AMMS,CLMS1,CNMXZ,MV,Zh2} for extensions to other settings. 

We know that numbers from an arithmetic progression do not form a sum-dominant set. (We prove in the next section.) It is natural to ask whether numbers from the union of several arithmetic progressions produce a sum-dominant set. Our first result is that the union of two arithmetic progressions with the same common difference is not sum-dominant. 
\begin{thm}\label{maintheo}
The union of two arithmetic progressions $P_1$ and $P_2$ (with the same common difference) is not sum-dominant. 
\end{thm}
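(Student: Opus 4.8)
The plan is to reduce to a clean normalized form and then count. Write $P_1$ and $P_2$ as arithmetic progressions with common difference $d$; after dividing by $\gcd$ and translating we may assume $d=1$, so $P_1=\{a,a+1,\dots,a+m\}$ and $P_2=\{b,b+1,\dots,b+n\}$ are two intervals of integers of lengths $m+1$ and $n+1$. Translating the whole set, assume $\min(P_1\cup P_2)=0$; by symmetry (the sum set and difference set behave compatibly under $A\mapsto -A$) assume $P_1$ is the block containing $0$, i.e. $a=0$. Let $A=P_1\cup P_2$. If the two intervals overlap or are adjacent, $A$ itself is an interval and we are done by the (already-established) fact that an arithmetic progression is not sum-dominant, so assume there is a genuine gap: $P_1=\{0,\dots,m\}$ and $P_2=\{b,\dots,b+n\}$ with $b>m+1$. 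Set $M=\max A=b+n$.

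Next I would write down $A+A$ and $A-A$ explicitly as unions of intervals. We have $A+A=(P_1+P_1)\cup(P_1+P_2)\cup(P_2+P_2)=[0,2m]\cup[b,b+m+n]\cup[2b,2b+2n]$, a union of three intervals (some of which may merge). For the difference set, $A-A=(P_1-P_1)\cup(P_2-P_2)\cup\pm(P_2-P_1)=[-m,m]\cup[-n,n]\cup[b-m,b+n]\cup[-(b+n),-(b-m)]$; since $A-A$ is symmetric about $0$, it suffices to understand its nonnegative part, which is $[0,\max(m,n)]\cup[b-m,b+n]$ (at most two intervals), and then $|A-A|=2|(A-A)\cap\mathbb{N}_{\ge 0}|-1$. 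The key structural point is that $A-A$ is controlled by the single "cross" block $[b-m,b+n]$ of length $m+n+1$ reflected, whereas $A+A$ has a genuinely three-block structure; the arithmetic is then a matter of comparing interval-union cardinalities in the various regimes determined by how $2m$, $b$, $b+m+n$, $2b$ interleave.

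The main obstacle, and the bulk of the work, is the case analysis: one must split on whether consecutive intervals among $[0,2m]$, $[b,b+m+n]$, $[2b,2b+2n]$ overlap/touch or leave gaps, and correspondingly whether $[0,\max(m,n)]$ and $[b-m,b+n]$ in the difference side overlap, and then verify $|A+A|\le|A-A|$ in each. I expect the cleanest uniform argument to run as follows: whenever a gap appears in $A+A$ between the $P_1+P_1$ block and the $P_1+P_2$ block (i.e. $2m<b-1$), an identical-length gap appears on the nonnegative side of $A-A$ between $[0,\max(m,n)]$ and $[b-m,b+n]$, and similarly for the gap between $P_1+P_2$ and $P_2+P_2$; meanwhile $A+A\subseteq[0,2M]$ and $A-A\supseteq$ a correspondingly positioned set, so a direct inequality $|[0,2M]\setminus(A+A)|\ge |A+A|-|A-A|$ type bound closes it. Concretely I would show $|A+A|\le (m+1)+(n+1)+(m+n+1)-1-[\text{overlap corrections}]$ and $|A-A|\ge 2(m+n+1)-1-[\text{the same corrections}]$, and conclude $|A-A|\ge|A+A|$ since $m+n+1\ge (m+1)+(n+1)-1$ always, with the overlap corrections on the difference side never exceeding those on the sum side. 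If an edge case resists this, fall back to finishing that subcase by the arithmetic-progression result applied to $A$ itself or to a sub-AP. I would end with a remark that the equality case forces $A$ to be an arithmetic progression, recovering the balanced-set picture.
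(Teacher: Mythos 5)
There is a genuine gap at the very first step. Your normalization ``after dividing by gcd and translating we may assume $d=1$, so $P_1$ and $P_2$ are two intervals of integers'' is only valid when $\min P_2-\min P_1$ happens to be a multiple of the common difference $d$. Two arithmetic progressions with the same common difference need not lie in the same residue class modulo $d$: for instance $P_1=\{0,3,6,9\}$ and $P_2=\{1,4,7\}$ both have common difference $3$, yet no affine transformation turns $P_1\cup P_2=\{0,1,3,4,6,7,9\}$ into a union of two integer intervals. Your entire subsequent analysis---writing $A+A=[0,2m]\cup[b,b+m+n]\cup[2b,2b+2n]$ and $A-A$ as a symmetric union of intervals---presupposes this interval structure and says nothing about such offset configurations. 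This is not a marginal omission: the paper normalizes only the common difference to $1$ (so the elements of $P_2$ may be non-integers relative to the grid of $P_1$), and a large part of its case analysis (Case I.1, the ``$k$ not a multiple of $1$'' branch of Case I.2, and all of Part II, where disjointness together with $\max P_1>\min P_2$ forces $m_2\notin\mathbb{Z}$) is devoted precisely to the offset situation, handled by different tools: adding the elements of $P_2$ one at a time and comparing at most $3$ new sums (Lemma~\ref{atmost3}) against at least $2$ new differences, plus Lemma~\ref{mcdonald} for the half-offset case. As written, your proposal proves a strictly weaker statement than the theorem.

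On the aligned case your interval bookkeeping does close, and in fact more cleanly than the paper's incremental argument: with $P_1=\{0,\dots,m\}$, $P_2=\{b,\dots,b+n\}$, $b\ge m+2$, setting $x=b-2m-1$ and $y=b-m-n-1$ (the two potential gap lengths in $A+A$) one computes
$$|A-A|-|A+A|\ =\ \max(0,x)+\max(0,y)-2\max\bigl(0,\min(x,y)\bigr)\ \ge\ 0,$$
since the single gap on the nonnegative half of $A-A$ has length exactly $\min(x,y)=b-m-\max(m,n)-1$. So the ``cleanest uniform argument'' you anticipate does exist and needs no fallback; but to prove the theorem you must still supply an argument for the case where $P_1$ and $P_2$ sit in different residue classes modulo their common difference.
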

This result partially proves the conjecture by the author of the current paper \cite{Chu} that the union of any two arbitrary arithmetic progressions is not sum-dominant. (The author is motivated by the anonymous referee's comment that the conjecture was marvelous and tantalizing.) Note that $\{0,2\}\cup \{3,7,11,\ldots,4k-1\}\cup \{4k,4k+2\}$ for $k\ge 5$ is sum-dominant \cite{Na2}, and the set is the union of three arithmetic progressions. Hence, \cite[Conjecture 17]{Chu} is the most we can do. 

Our next result concerns the cardinality of a sum-dominant set. Hegarty \cite{He} proved that a sum-dominant set must have at least $8$ elements with the help of computers. The author of the current paper provided a human-understandable proof that a sum-dominant set must have at least $7$ elements \cite{Chu, Chu2}. Another natural question is about the largest cardinality of a sum-dominant set. It is well-known that a sum-dominant set can be arbitrarily large, so we put a restriction on the size of the set to have the following result 
\begin{thm}\label{notallset}
Fix $n\ge 16$. Let $N$ be the cardinality of the largest sum-dominant subset(s) of $\{0,1,\ldots,n-1\}$ that contain(s) $0$ and $n-1$. Then $n-7\le N\le n-4$. 
\end{thm}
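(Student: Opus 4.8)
The plan is to establish the two inequalities separately: the lower bound $N\ge n-7$ by exhibiting one explicit sum-dominant set of size $n-7$ containing $0$ and $n-1$, and the upper bound $N\le n-4$ by showing that no subset of $\{0,\dots,n-1\}$ obtained by deleting at most three points (while keeping $0$ and $n-1$) can be sum-dominant.

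For the lower bound I would take
$$A_n=\{0,1,2,4\}\ \cup\ \{7,8,\dots,n-8\}\ \cup\ \{n-4,n-2,n-1\}\qquad(n\ge16),$$
so that $|A_n|=n-7$, $0,n-1\in A_n$, and the complement of $A_n$ in $\{0,\dots,n-1\}$ is $\{3,5,6\}\cup\{n-7,n-6,n-5,n-3\}$. To check sum-dominance, write $A_n=L\sqcup M\sqcup R$ with $L=\{0,1,2,4\}$, $M=\{7,\dots,n-8\}$ and $R=\{n-4,n-2,n-1\}$, and compute $A_n\pm A_n$ block by block. Since $M$ is a block of consecutive integers, every product that involves $M$ is an interval, and for $n\ge16$ these intervals are long enough to bridge the short gaps in $L$ and $R$ and hence to fill the whole ``interior'' of both $A_n+A_n$ and $A_n-A_n$; the hypothesis $n\ge16$ is exactly what makes this bridging work (for $n=15$ one gets extra gaps and the set is not sum-dominant). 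What remains is the two fringes: the bottom and top of the sumset are controlled by $L+L$ and by $\tilde R+\tilde R$ (with $\tilde R=\{n-1-\rho:\rho\in R\}=\{0,1,3\}$), and the top, hence also the bottom by symmetry, of the difference set is controlled by $L+\tilde R$. One computes $L+L\supseteq\{0,\dots,6\}$, $\tilde R+\tilde R=\{0,1,2,3,4,6\}$ and $L+\tilde R=\{0,1,2,3,4,5,7\}$, so $A_n+A_n$ misses exactly the value $2n-7$ while $A_n-A_n$ misses exactly $\pm(n-7)$. Hence $|A_n+A_n|=2n-2>2n-3=|A_n-A_n|$.

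For the upper bound, let $B=\{0,\dots,n-1\}\setminus A$ with $0,n-1\notin B$ and $|B|\le3$, and write $|A+A|=(2n-1)-s$, $|A-A|=(2n-1)-\delta$, where $s$ and $\delta$ count the sums and differences lost relative to the full interval; it suffices to show $s\ge\delta$. A given sum value $v$ has at least $\lfloor v/2\rfloor+1$ representing pairs, and each point of $B$ destroys at most one of them, so a lost sum satisfies $v\in\{1,\dots,5\}\cup\{2n-7,\dots,2n-3\}$. A given difference value $d>0$ has $n-d$ representing pairs, and each point of $B$ destroys at most two of them, so a lost difference satisfies $d\ge n-6$; together with the symmetry of $A-A$ this forces $\delta$ to be even, and a short inspection shows that with only three deletions the only difference value that can actually be killed is $d=n-2$, which requires $\{1,n-2\}\subseteq B$. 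In that case $1\in B$ kills the sum $1$ and $n-2\in B$ kills the sum $2n-3$, so $s\ge2=\delta$; and if $\delta=0$ then $s\ge0=\delta$ trivially. Either way $|A+A|\le|A-A|$, so $A$ is not sum-dominant, giving $N\le n-4$.

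The real difficulty is the lower bound. The block analysis shows that a set of the form $L\sqcup M\sqcup R$ with $M$ a long interval is sum-dominant precisely when the total fringe sum-deficiency $|[0,r)\setminus(L+L)|+|[0,t)\setminus(\tilde R+\tilde R)|$ is strictly smaller than twice the fringe difference-deficiency $|[0,\min(r,t))\setminus(L+\tilde R)|$, and one quickly finds that almost every naive candidate — for instance stretching the classical MSTD set $\{0,2,3,4,7,11,12,14\}$ by lengthening its middle — makes these two quantities equal and produces a balanced set. One has to arrange a value $v$ lying in $L+L$ and in $\tilde R+\tilde R$ but not in $L+\tilde R$ while spending as few deletions as possible; the choice $L=\{0,1,2,4\}$, $\tilde R=\{0,1,3\}$, $v=6$ accomplishes this with a total of seven deleted points, and verifying that seven is what the method actually needs (and not four, five, or six) is the delicate combinatorial part. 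The upper bound, by contrast, is a finite case check once lost sums and differences have been localized to the fringes.
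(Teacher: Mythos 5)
Your lower bound is exactly the paper's: your $A_n$ is the set $K$ of the paper's Lemma~\ref{lowerboundof7} with $m=n-8$, and your identification of the unique missing sum $2n-7$ and the unique missing differences $\pm(n-7)$ agrees with the paper's computation, so that half is correct (and correctly verified). Your upper bound, however, takes a genuinely different route. The paper argues structurally: it runs through the possible patterns of the at most three deleted points and disposes of each case via Lemma~\ref{mcdonald}(1) (all consecutive gaps $\le 2$) or Theorem~\ref{maintheo} (union of two arithmetic progressions with the same common difference), plus a direct check of the remaining configuration. You instead count representations: a lost sum must have at most $3$ representing pairs and a lost difference at most $6$, which localizes all losses to the fringes and reduces everything to a finite comparison of $s$ and $\delta$. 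That is a clean and in some ways more transparent argument --- it explains \emph{why} only fringe values matter --- and it would generalize more readily.

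But your ``short inspection'' of the killable differences is wrong, and this is a genuine gap. It is not true that $d=n-2$ is the only difference destroyable by three deletions: take $B=\{1,2,n-3\}$, so $A=\{0\}\cup\{3,4,\ldots,n-4\}\cup\{n-2,n-1\}$. The representing pairs of $d=n-3$ are $(0,n-3)$, $(1,n-2)$, $(2,n-1)$, and each contains a point of $B$, so $n-3\notin A-A$, while $n-2=(n-2)-0$ survives. In general $d=n-3$ is killed exactly when $\{2,n-3\}\subseteq B$ together with one of $1$ or $n-2$, and this case is simply absent from your analysis; your argument as written only covers $\delta=0$ and the configuration $\{1,n-2\}\subseteq B$. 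The conclusion does survive the missing case --- if $1\in B$ then the sums $1$ and $2$ are both lost (their only pairs are $(0,1)$ and $(0,2),(1,1)$), and if $n-2\in B$ then $2n-3$ and $2n-4$ are both lost, so $s\ge 2=\delta$ either way --- but you must add this case (and verify, as your representation counts do show, that $d\in\{n-4,n-5,n-6\}$ genuinely cannot be killed by three points) before the upper bound is proved. A minor further slip: the count ``at least $\lfloor v/2\rfloor+1$ representing pairs'' is false for $v>n-1$ and should be $\min\bigl(\lfloor v/2\rfloor,\lfloor(2n-2-v)/2\rfloor\bigr)+1$, though this does not affect the localization $v\le 5$ or $v\ge 2n-7$.
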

The theorem implies that from an arithmetic progression of length at least $16$, we need to discard at least $4$ elements and not more than $7$ elements (in a clever way) to have the largest sum-dominant set(s). A corollary is that if we want to search for all sum-dominant subsets of $\{0,1,\ldots,n-1\}$, we only need to look for subsets of size between $8$ and $n-4$.
\begin{conj}\label{discard7}
Fix $n\ge 16$. Let $N$ be the cardinality of the largest sum-dominant subset(s) of $\{0,1,\ldots,n-1\}$ that contain(s) $0$ and $n-1$. Then $N = n-7$. 
\end{conj}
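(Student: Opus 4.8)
The plan is to combine Theorem~\ref{notallset}, which already gives $n-7\le N\le n-4$, with the new statement that no subset $A\subseteq\{0,1,\dots,n-1\}$ containing $0$ and $n-1$ with $|A|\ge n-6$ is sum-dominant. Write $S=\{0,1,\dots,n-1\}\setminus A$, so $0,n-1\notin S$ and the task is to treat $|S|\in\{4,5,6\}$. First I would recast everything in terms of what is \emph{missing}: put $M_+=\{0,1,\dots,2n-2\}\setminus(A+A)$ and $M_-=\{1,\dots,n-1\}\setminus(A-A)$. Since $A-A$ is symmetric and contains $0$, we have $|A-A|=2n-1-2|M_-|$ and $|A+A|=2n-1-|M_+|$, so $A$ is sum-dominant if and only if $|M_+|<2|M_-|$. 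Thus the whole problem becomes: if $|S|\le 6$ then $|M_+|\ge 2|M_-|$.

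Next I would establish localization and ``forced-element'' lemmas. Pigeonhole: if $d\notin A-A$ then $A$ and $A+d$ are disjoint subsets of $[0,n-1+d]$, so $n-|S|=|A|\le(n+d)/2$, i.e.\ $d\ge n-2|S|$; applying the same idea to $A$ and $v-A$, together with the reflection $A\mapsto(n-1)-A$ (which preserves sum-dominance, fixes $|M_-|$, and sends $M_+$ to $(2n-2)-M_+$), shows that every missing sum lies in $[1,2|S|-1]\cup[2n-1-2|S|,2n-3]$. Hence $M_-$ sits in a window of length $\le 2|S|$ just below $n-1$, and $M_+$ lives in two windows of length $\le 2|S|$ at the ends of $[0,2n-2]$. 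Examining the extreme representations then gives: $d\in M_-$ forces both $d\in S$ and $n-1-d\in S$; a missing sum $v\le n-1$ forces $v\in S$; a missing sum $v\ge n-1$ forces $v-n+1\in S$. So each missing difference consumes one ``low'' element ($\le 2|S|-1$) and one ``high'' element ($\ge n-2|S|$) of $S$, which for $n>4|S|-1$ are in disjoint windows; in particular $|M_-|\le\min(|L|,|H|)$ where $L=S\cap[1,2|S|-1]$, $H=S\cap[n-2|S|,n-2]$, and $|M_-|$ is at most a small constant (at most $3$ when $|S|=6$, e.g.\ realized by $S=\{1,2,3\}\cup\{n-4,n-3,n-2\}$). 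A further analysis of the interior pairs $\{b,b+d\}$, $1\le b\le n-2-d$, which must all meet $S$, pins down that making $|M_-|$ large is only possible for a short, $n$-independent list of ``end-shapes'' of $S$, and for each such shape the $S$-elements are forced to cluster near the two ends of $[0,n-1]$ --- which in turn forces a comparably large $M_+$ near the two ends of $[0,2n-2]$ (in the example above, $M_+=\{1,2,3\}\cup\{2n-5,2n-4,2n-3\}$, so $|M_+|=6=2|M_-|$).

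With these lemmas the argument becomes finitary: for $n$ above a fixed threshold the shape of $S$ inside the two windows is independent of $n$, so one enumerates the finitely many shapes with $|S|\in\{4,5,6\}$ that can give $|M_-|\ge 1$, computes $M_-$ exactly in each, and bounds $|M_+|$ from below --- the key quantitative input being that a cluster of $t$ removed points near an end of $[0,n-1]$ leaves at least $t$ missing sums near the corresponding end of $[0,2n-2]$ (a short $|X+X|\ge 2|X|-1$-type estimate on the top and bottom parts of $A$), so that the bottom-cluster structure gives $\ge|M_-|$ small missing sums and the top-cluster structure gives $\ge|M_-|$ large missing sums. One then checks $|M_+|\ge 2|M_-|$ in every case; the residual small range $16\le n\le(\text{threshold})$ is handled by direct computation.

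The main obstacle is the tightness of $|M_+|\ge 2|M_-|$ together with the proliferation of cases when $|S|=6$. The near-optimal configurations satisfy $|M_+|=2|M_-|$ exactly (as in the $n-7$ construction, where $|M_+|=3$ and $|M_-|=2$), so there is no slack to absorb crude estimates: one must show that \emph{every} size-$6$ set $S$ that kills, say, two differences kills at least four sums, and this does not follow from the pigeonhole bounds alone --- it requires tracking precisely which representations survive, essentially a careful and somewhat ad hoc refinement of the interior-pair analysis showing that the low elements forced by missing differences really do form a bottom cluster (and dually for the top). It is exactly the difficulty of ruling out all of these near-miss configurations that makes the statement a conjecture rather than a theorem; I expect a complete proof along these lines to be possible, but to require either a genuinely cleverer invariant linking $|M_+|$ and $|M_-|$ or a moderately large verified case analysis.
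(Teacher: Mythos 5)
This statement is stated in the paper as a \emph{conjecture}: the paper offers no proof of it, only the two-sided bound $n-7\le N\le n-4$ of Theorem~\ref{notallset} together with a computer verification for $16\le n\le 34$. So there is nothing in the paper to compare your argument against; the only question is whether your proposal actually closes the gap, and it does not. Your reductions are sound and worth recording: the reformulation ``$A$ is sum-dominant iff $|M_+|<2|M_-|$'' is correct given $0,n-1\in A$; the pigeonhole localization of missing differences to $[n-2|S|,n-2]$ and of missing sums to $[1,2|S|-1]\cup[2n-1-2|S|,2n-3]$ is correct; and the forced-element observations ($d\in M_-$ forces $d,n-1-d\in S$; a small missing sum $v$ forces $v\in S$) are correct and immediately give $|M_-|\le\lfloor|S|/2\rfloor$ for $n$ large. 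None of this is in dispute.

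The genuine gap is the one you name yourself: the entire content of the conjecture beyond Theorem~\ref{notallset} is the claim that every $S$ with $|S|\in\{4,5,6\}$ forces $|M_+|\ge 2|M_-|$, and that claim is asserted to follow from ``a careful and somewhat ad hoc refinement of the interior-pair analysis'' that is never carried out. Your own example shows the inequality is attained with equality, so there is no slack, and the quantitative input you propose to use is false as stated: a cluster of $t$ removed points near an end need not kill $t$ sums (take $S$ meeting the bottom window in $\{2\}$ alone: one point removed, yet $2=1+1$ survives, so no sum is lost). It only holds for genuine initial clusters $\{1,\dots,t\}$, which is precisely the structural fact you would have to prove about the near-miss configurations. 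Until that enumeration is done and each shape checked, the proposal proves nothing beyond $N\le n-4$; what you have is a plausible programme, consistent with the computational evidence for $16\le n\le 34$, not a proof.
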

We run a computer program to find that the conjecture holds for all $16\le n\le 34$. For $n\le 14$, $N$ does not exist. For $n = 15$, $N=9$, corresponding to the set $\{0, 1, 2, 4, 5, 9, 12, 13, 14\}$; that is, we discard $6$ elements. 

Our final result is related to the partition of an arithmetic progression into sum-dominant subsets. Asada et al. proved that as $r\rightarrow \infty$, the proportion of $2$-decompositions of $\{1,2,\ldots,r\}$ into sum-dominant subsets is bounded below by a positive constant \cite{AMMS}. Continuing the work, the author of the current paper with Luntzlara, Miller, and Shao proved that it is possible to partition $\{1,2\ldots,r\}$ (for $n$ sufficiently large) into $k\ge 3$ sum-dominant subsets. By defining $R$ to be the smallest integer such that for all $r\ge R$, $\{1,2,\ldots,r\}$ can be $k$-decomposed into MSTD subsets, while $\{1,2,\ldots,R-1\}$ cannot, the authors established rough lower and upper bounds for $R$. However, the upper bound when $k=3$ is very loose because it depends on a sum-dominant subset $A$ with $|A+A|-|A-A|\ge 10|A|$. 
\begin{thm}\label{3-partition}
Let $R\in \mathbb{N}$ have the property that for $r\ge R$, $\{1,2,\ldots,r\}$ can be partitioned into $3$ sum-dominant subsets, while $\{1,2,\ldots,R-1\}$ cannot. Then $24\le R\le 145$. 
\end{thm}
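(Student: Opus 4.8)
The plan is to prove the two bounds separately, with all the difficulty concentrated in the upper bound.

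\textbf{Lower bound ($R\ge 24$).} By Hegarty's theorem every sum-dominant set has at least $8$ elements, so if $\{1,\dots,r\}=A_1\sqcup A_2\sqcup A_3$ with each $A_i$ sum-dominant then $r=|A_1|+|A_2|+|A_3|\ge 24$; in particular $\{1,\dots,23\}$ admits no such partition. Since $R$ has the property that $\{1,\dots,r\}$ is $3$-partitionable for every $r\ge R$, this forces $R\ge 24$. (If one prefers to avoid the computer-assisted bound $\ge 8$ and invoke only the human-verifiable bound $\ge 7$, one must additionally exclude $r\in\{21,22,23\}$; this is a finite check, and it should follow from the rigidity of a sum-dominant set of at most $9$ elements lying in an interval of length at most $23$ — such a set has an almost full difference set, which severely restricts how three of them could tile $\{1,\dots,23\}$.)

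\textbf{Upper bound ($R\le 145$).} I would prove the sharper statement: for every $r\ge 145$ the interval $\{1,\dots,r\}$ can be partitioned into three sum-dominant subsets. This yields $R\le 145$, since if $R\ge 146$ then $\{1,\dots,R-1\}\supseteq\{1,\dots,145\}$ would be simultaneously non-$3$-partitionable (by the defining property of $R$) and $3$-partitionable (by the statement just proved). The construction is of ``seed plus arithmetic-progression fringe'' type, with a fixed common difference $d$. I would first exhibit, for each base length $r_0$ in a bounded range with $r_0\le 145$, an explicit partition $\{1,\dots,r_0\}=A_1\sqcup A_2\sqcup A_3$ in which each $A_i$ is sum-dominant and ends in a long arithmetic progression of common difference $d$; in the ``bulk'' the three progressions interleave and tile an interval, while three small explicit sum-dominant sets, placed in disjoint windows near the bottom, both account for the missing residues there and supply the positive surplus. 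These base cases can be produced by a finite search.

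The engine is an absorption lemma: if $\{1,\dots,r\}$ is partitioned into three sum-dominant subsets each carrying such a fringe, then $\{1,\dots,r+1\}$ — and hence, by iteration, $\{1,\dots,r+\ell\}$ for every $\ell\ge 1$ — can be so partitioned, by adjoining $r+1$ to whichever part's bulk progression it extends. The heart of the matter is that appending one more term of common difference $d$ to the top of a sum-dominant set whose fringe already has that common difference and is sufficiently long adds exactly as many new sums as new differences, so the surplus $|A+A|-|A-A|$ is preserved and the fringe remains of the same type, allowing the step to iterate. This is the familiar base-extension phenomenon of the MSTD literature; I would verify it directly by tracking $A+A$ and $A-A$ separately above and below twice the old maximum. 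Running the lemma from the finitely many base cases covers all $r\ge 145$, and $145$ is the point past which the fringes of every base case are long enough for the lemma to apply — equivalently, the room needed to fit the three seeds together with fringes of the minimal length required for surplus stabilization.

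The main obstacle I expect is the absorption lemma in the three-part setting: one must choose $d$, the three seeds, and the bulk interleaving so that every part simultaneously (i) is sum-dominant, (ii) has a fringe of the prescribed type and of length past the stabilization threshold, and (iii) retains these properties after absorbing its share of the new integers. The ``new sums equal new differences'' bookkeeping near the top of each set is genuinely delicate, since the cross sums $a+(r+1)$ with $a$ small and the cross differences $(r+1)-a$ must be matched exactly, not merely bounded. A secondary obstacle is pinning down the constant: confirming that $145$ (rather than some larger value) suffices will likely rest on a clean estimate for the seed sizes together with a stated finite verification of the base cases.
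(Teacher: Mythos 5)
Your lower bound is fine and is the intended one: three pairwise disjoint sum-dominant subsets of $\{1,\dots,r\}$ force $r\ge 3\cdot 8=24$ by Hegarty's bound, so $R\ge 24$. (The parenthetical attempt to avoid Hegarty is itself a hand-wave, but it is not needed.)

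The upper bound is where the gap lies, and it is genuine. First, your absorption lemma is not established and is not correct as stated: appending one more term $r+1$ to the top of a part whose upper fringe is a long arithmetic progression of difference $d$ does \emph{not} in general add equally many new sums as new differences. The new differences $\pm\bigl((r+1)-a\bigr)$ that can actually appear are governed by the elements $a$ with $a-d\notin A$, i.e.\ by every block boundary of $A$ modulo $d$ and by the bottom fringe, while the new sums $(r+1)+a$ are governed by the elements with $a+d\notin A$; both counts depend on the global structure of $A$, not on the length of the top fringe, and there is no reason for them to coincide. So the invariant you want to iterate has not been isolated, let alone proved. Second, every concrete ingredient --- the choice of $d$, the three seeds, the base partitions, and above all the constant $145$ --- is deferred to an unexecuted finite search; as written, $145$ is asserted, not derived, and that explicit number is the content of the theorem. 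The paper avoids both problems by reusing the Miller--Orosz--Scheinerman-type construction of \cite{CLMS1}: each of two parts has the form $L\cup O_1\cup M\cup O_2\cup R'$ with fixed fringes and blocks and a \emph{flexible middle} $M$, where the only requirement on $M$ is that it contain a sparse skeleton of consecutive pairs (resp.\ triplets) at bounded gaps --- a robust condition under which all middle sums and differences are automatically present, so sum-dominance is controlled entirely by the fixed fringes for every admissible length $124+m$, $m\ge 21$. The third part is a single fixed $8$-element sum-dominant set $S=\{66,68,69,70,73,77,78,80\}$ inserted into a gap of the middle, and the two middles $M_1,M_2$ partition what remains; the case $m=21$ gives $145$ exactly. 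Growing the middle is what makes the length parameter free; growing the top, as you propose, is exactly where the fringe bookkeeping breaks.
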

This theorem answers a question raised by the author of the current paper et al. about whether we can find a more efficient way to decompose $\{1,2,\ldots,r\}$ into $3$ sum-dominant sets. We find a smaller upper bound by a new way of partitioning $\{1,2,\ldots,n\}$ into $3$ sum-dominant subsets. Our construction is similar to that of Miller et al. \cite{MOS} and utilizes the fact that their construction allows a long run of missing elements. The long run of missing elements is where we can insert a fixed sum-dominant set in. 

\subsection{Notation}
We introduce some notation. Let $A$ and $B$ be sets. We write $A\rightarrow B$ to mean the introduction of elements in $A$ to $B$. We also use a different notation to write a set, which was first introduced by Spohn \cite{Sp}. Given a set $S = \{m_1, m_2, \ldots, m_n\}$, we arrange its elements in increasing order and find the differences between two consecutive numbers to form a sequence. Suppose that $m_1 < m_2 < \cdots < m_n$, then our sequence is $m_2 - m_1, m_3 - m_2, m_4 -
m_3, \ldots , m_n - m_{n-1}$, and we represent $S = (m_1\,|\,m_2 - m_1, m_3 - m_2, m_4 - m_3, \ldots , m_n - m_{n-1}) = (m_1\,|\,a_1,\ldots,a_{n-1})$, where $a_i = m_{i+1}-m_i$. Any difference in $S-S$ must be equal to at least a sum $a_i+\cdots+a_j$ for some $1\le i\le j\le n-1$. Take $S = \{3, 2, 15, 10, 9\}$, for example. We arrange the elements in increasing order to have $2$, $3$, $9$, $10$, $15$, form a sequence by looking at the difference between two consecutive numbers: $1$, $6$, $1$, $5$, and write $S = (2\,|\,1, 6, 1, 5)$. All information about a set is preserved in this notation. 

An arithmetic progression is a sequence of the form $(a,a+d,a+2d,a+3d,\ldots,a+kd)$ for any arbitrary numbers $a$, $k$, and the common difference $d$. Because sum-dominance is preserved under affine transformations, we can safely assume that our arithmetic progressions contain nonnegative numbers with $1$ being the common difference. To emphasize, all arithmetic progressions we consider will have nonnegative numbers and have the same common difference, which is $1$. 

\section{Important Results}
We use the definition of a symmetric set given by Nathanson \cite{Na1}: a set $A$ is symmetric if there exists a number $a$ such that $a-A = A$. If so, we say that the set $A$ is symmetric about $a$. The following proposition was proved by Nathanson \cite{Na1}. 
\begin{prop}\label{sym}
A symmetric set is balanced. 
\end{prop}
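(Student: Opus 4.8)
The plan is to produce an explicit bijection between $A+A$ and $A-A$ built from the reflection that witnesses the symmetry. Suppose $A$ is symmetric about $a$, so that $a - A = A$. Consider the map $\phi(t) = a - t$, which is an involution of $\mathbb{Z}$ and in particular a bijection. The goal is to show that $\phi$ restricts to a bijection from $A+A$ onto $A-A$, which immediately gives $|A+A| = |A-A|$.

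First I would verify $\phi(A+A) \subseteq A-A$: if $s = x+y$ with $x,y \in A$, then $\phi(s) = a - x - y = (a-x) - y$, and $a - x \in a - A = A$, so $\phi(s) \in A - A$. Next I would verify the reverse inclusion $\phi(A-A) \subseteq A+A$: if $d = u - v$ with $u,v \in A$, then $\phi(d) = a - u + v = (a-u) + v \in A+A$, again because $a - u \in A$. Since $\phi$ is its own inverse, these two inclusions together with the injectivity of $\phi$ force $\phi$ to carry $A+A$ bijectively onto $A-A$. Hence $|A+A| = |A-A|$, i.e., $A$ is balanced. A slicker phrasing of the same computation is to note directly that $A - A = (a - A) - A = a - (A+A)$, and that $t \mapsto a - t$ is a bijection, so $|A-A| = |a - (A+A)| = |A+A|$.

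There is no real obstacle in this argument; the only point requiring (minimal) care is checking that the reflection genuinely interchanges the roles of sums and differences, which is the one-line calculation above. It may be worth remarking in passing that for a finite set the center of symmetry must be $a = \min A + \max A$, although this observation is not needed for the proof.
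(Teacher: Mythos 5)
Your proof is correct and is essentially the paper's argument: the paper substitutes $A = a-A$ into one slot of the sum set to get $|A+A| = |A+(a-A)| = |a+(A-A)| = |A-A|$, which is exactly your ``slicker phrasing'' with the substitution performed on the other side. The explicit bijection $\phi(t)=a-t$ is just the same computation spelled out in more detail.
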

\begin{proof}
Let $A$ be a symmetric set about $a$. We have $|A+A| = |A+(a-A)| = |a+(A-A)| = |A-A|$. Hence, $A$ is balanced. 
\end{proof}
Though symmetric sets are not sum-dominant, adding a few numbers into these sets (in a clever way) can produce sum-dominant sets. Examples of such a technique were provided by Hegarty \cite{He} and Nathanson \cite{Na2}. 
\begin{cor}\label{arithnotsumdominant}
A set of numbers from an arithmetic progression is not sum-dominant. 
\end{cor}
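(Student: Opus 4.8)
The plan is to deduce Corollary~\ref{arithnotsumdominant} directly from Proposition~\ref{sym}, using the observation that an arithmetic progression is a symmetric set. Concretely, let $A = \{a, a+d, a+2d, \ldots, a+kd\}$ be a (finite) arithmetic progression with common difference $d$. I would exhibit the center of symmetry explicitly: set $c = 2a + kd$, which equals $(a) + (a+kd)$, the sum of the first and last terms. Then for each $i$ with $0 \le i \le k$, the element $c - (a+id) = (2a+kd) - (a+id) = a + (k-i)d$ is again a term of the progression, and the map $i \mapsto k-i$ is a bijection on $\{0,1,\ldots,k\}$. Hence $c - A = A$, so $A$ is symmetric about $c$.

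Once symmetry is established, Proposition~\ref{sym} immediately gives $|A+A| = |A-A|$, i.e. $A$ is balanced, and in particular not sum-dominant (since sum-dominance requires $|A+A| > |A-A|$, a strict inequality). That completes the argument.

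There is essentially no hard part here — the corollary is a one-line consequence of the proposition, and the only ``work'' is writing down the center of symmetry and checking the reflection maps the progression onto itself, which is routine. If anything, the only thing to be slightly careful about is the degenerate case where $A$ is empty or a singleton (or, in the paper's normalization, where $d = 1$ and one wants the statement for the concrete progression $\{0,1,\ldots,n-1\}$); in those cases $A$ is trivially symmetric about twice its single element, or about $\tfrac{n-1}{2}$ in general, and the conclusion still holds. I would simply remark that every arithmetic progression is symmetric about the average (equivalently, the sum) of its endpoints and invoke Proposition~\ref{sym}.
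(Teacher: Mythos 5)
Your proof is correct and follows exactly the paper's route: the paper likewise observes that an arithmetic progression is symmetric about the sum of its minimum and maximum and then invokes Proposition~\ref{sym}. Your write-up just makes the reflection $i \mapsto k-i$ explicit, which the paper leaves as a remark with an example.
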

Note that a set of numbers from an arithmetic progression is symmetric about the sum of the maximum and the minimum of the arithmetic progression.
For example, the set $E= \{3,5,7,9,11\}$ is symmetric about $14$. The following lemma is proved by Macdonald and Street \cite{MS}.
\begin{lem}\label{mcdonald}Given a finite set $A = (0\,|\,a_1,a_2,\ldots,a_n)$, the following claims hold.
\begin{itemize}
    \item [(1)] If $a_i\le 2$ for all $i$, then $A$ is not sum-dominant. 
    \item [(2)] If $a_i\in \{1,m\}$ and the first and last times that $1$ occurs as a difference, it occurs in a block of at least $m-1$ consecutive differences, then $A$ is not sum-dominant. 
\end{itemize}
\end{lem}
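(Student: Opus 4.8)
\emph{Proof proposal.} After an affine normalization we may assume $\min A = 0$; set $M = \max A$, so that $A+A \subseteq \{0,1,\dots,2M\}$ and $A-A \subseteq \{-M,\dots,M\}$, two ambient sets of the same cardinality $2M+1$. Writing $g_+$ for the number of integers in $\{0,\dots,2M\}$ missing from $A+A$ and $g_-$ for the number of integers in $\{-M,\dots,M\}$ missing from $A-A$, we have $|A-A|-|A+A| = g_+ - g_-$, so in both parts it suffices to prove $g_+ \ge g_-$. If all the $a_i$ are equal, $A$ is an arithmetic progression and Corollary~\ref{arithnotsumdominant} (via Proposition~\ref{sym}) already gives equality; so we may assume the $a_i$ take both possible values. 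The main tool is the run decomposition $A = I_1 \sqcup I_2 \sqcup \cdots \sqcup I_t$ into maximal blocks of consecutive integers, with $0 \in I_1$ and $M \in I_t$; in part~(1) consecutive runs are separated by a single omitted integer, and in part~(2) by exactly $m-1$ omitted integers. Since each $I_j$ is an interval, so is every $I_j \pm I_k$, and
\[ A+A = \bigcup_{1\le j\le k\le t}(I_j+I_k), \qquad A-A = \bigcup_{1\le j,\,k\le t}(I_j - I_k), \]
are explicit unions of translated intervals (``bands''); thus $g_+$ and $g_-$ are the total lengths of the gaps these band-families leave uncovered in $\{0,\dots,2M\}$ and $\{-M,\dots,M\}$.

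The heart of the matter is a ``bands overlap'' observation: if a run $I_k$ is long relative to the omitted blocks flanking it --- length $\ge 2$ in part~(1), length $\ge m$ in part~(2) --- then as $j$ sweeps over $1,\dots,t$ the difference bands $I_k - I_j$ successively abut or overlap, so their union is a single interval, and likewise for the sum bands $I_k + I_j$. The hypotheses are tailored to make this bite at the ends: in part~(2) the first and last blocks of at least $m-1$ consecutive $1$'s are precisely long runs at or near the two ends, and in the easy half of part~(1) --- where the first difference or the last difference equals $1$ --- there is already a run of length $\ge 2$ touching an end. Feeding such an end-run into the observation shows that the positive part of $A-A$ is an interval reaching all the way to $M$ (and symmetrically down to $-M$), so $g_- = 0$ and $g_+ \ge g_-$ is immediate. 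This settles part~(2) and part~(1) outside of one configuration.

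The remaining case --- part~(1) with both $0$ and $M$ isolated (first difference $=$ last difference $= 2$), and the analogous part~(2) situation where the set begins and ends with singletons spaced $m$ apart before the first and after the last long run --- is where $A-A$ genuinely omits values, and this is the step I expect to be the real obstacle. Here the plan is a direct gap-by-gap comparison of the two band-families: a gap of $A-A$ is forced by a pair of runs lying too far apart relative to their lengths, and I would match it to a gap of $A+A$ coming from the same (or a neighbouring) pair of runs added rather than subtracted, while the isolated extreme elements contribute ``free'' gaps of $A+A$ near $0$ and near $2M$ --- the omitted integers just inside each end --- which absorb the fact that gaps of $A-A$ are counted twice, as $\pm d$, around $0$. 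Verifying that this correspondence is injective, and in particular that a long string of short interior runs cannot manufacture more gaps of $A-A$ than of $A+A$, is the delicate part; for part~(1) it is convenient to record first the elementary fact that, because the omitted integers of $A$ are pairwise non-adjacent, $2$ and $2M-2$ always lie in $A+A$, which keeps short interior runs from opening long gaps on the sum side and seeds the matching.
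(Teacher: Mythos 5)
First, a point of comparison: the paper does not prove Lemma~\ref{mcdonald} at all --- it is quoted from Macdonald and Street \cite{MS} --- so there is no internal proof to measure your attempt against; it has to stand on its own as a proof of a nontrivial published theorem.

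Your framework is sound: both $A+A$ and $A-A$ sit in ambient intervals of the same cardinality $2M+1$, so it suffices to show $g_+\ge g_-$, and the run decomposition with the ``bands overlap'' observation is correctly applied where it applies (in part (1) it does give $g_-=0$ whenever the first or last run of $A$ has length at least $2$). But there are two genuine gaps. The smaller one: the sentence ``this settles part (2)'' overreaches and is contradicted by your own next paragraph, because the hypothesis only forces the first and last blocks of $1$'s to be long, not to sit at the extremes of $A$. For $m=3$ and $A=(0\,|\,3,1,1,3)=\{0,3,4,5,8\}$ the long run is interior, $A-A$ misses $\pm 6,\pm 7$, so $g_-=4\ne 0$ (the set is balanced, with $A+A$ missing $1,2,14,15$); the band argument alone decides nothing here. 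The serious gap is the case you yourself flag as ``the real obstacle'': part (1) with $a_1=a_n=2$, and the part (2) configurations just illustrated. That is exactly where the content of the Macdonald--Street theorem lives, and you only describe a hoped-for injection from gaps of $A-A$ to gaps of $A+A$ without constructing it or verifying injectivity; the observation that $2$ and $2M-2$ always lie in $A+A$ is correct but does not by itself control how many sums versus differences a long stretch of short interior runs can destroy. As written, the proposal is an honest plan plus a disposal of the easy cases, not a proof.
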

The following lemma is trivial but very useful in our proof of Theorem \ref{maintheo}. 

\begin{lem}\label{atmost3}
The following claims hold.
\begin{itemize}
    \item [(1)] Given an arithmetic progression $P_1$, $\{\max P_1+1\}\rightarrow P_1$ gives $2$ new sums. 
    \item [(2)] Given arithmetic progressions $P_1$ and $P_2$, $\{\max P_1+1\}\rightarrow (P_1\cup P_2)$ gives at most $3$ new sums. 
\end{itemize}
\end{lem}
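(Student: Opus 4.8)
The plan is to reduce both claims to the elementary fact that an arithmetic progression of common difference $1$ is simply an interval of consecutive integers, so the sum set of two such progressions is again an interval whose endpoints can be read off at once; the ``new sums'' created by adjoining one extra element are then confined to a tiny window just past those endpoints.

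First I would fix notation: write $P_1=\{p,p+1,\ldots,p+k\}$ and $P_2=\{q,q+1,\ldots,q+\ell\}$, and put $x:=\max P_1+1=p+k+1$. The only input needed is that for intervals of integers $I$ and $J$ the set $I+J$ is the interval running from $\min I+\min J$ to $\max I+\max J$; in particular $P_1+P_1$, $P_1+P_2$, and $P_2+P_2$ are intervals with explicit endpoints. Next I would use the decomposition
\begin{align*}
\bigl(P_1\cup P_2\cup\{x\}\bigr)&+\bigl(P_1\cup P_2\cup\{x\}\bigr)\\
&=\Bigl[(P_1\cup P_2)+(P_1\cup P_2)\Bigr]\cup(x+P_1)\cup(x+P_2)\cup\{2x\},
\end{align*}
so that the new sums are exactly those elements of $(x+P_1)\cup(x+P_2)\cup\{2x\}$ lying outside $(P_1\cup P_2)+(P_1\cup P_2)$.

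For part (1), where $P_2$ is absent, the set $x+P_1$ has largest element $2p+2k+1$, and every one of its elements that does not exceed $2p+2k$ already belongs to $P_1+P_1=\{2p,\ldots,2p+2k\}$; hence the only candidates for new sums are $2p+2k+1$ and $2x=2p+2k+2$, and since $P_1+P_1$ stops at $2p+2k$ both are genuinely new, giving exactly $2$. For part (2) I would run the same argument three times: $x+P_1$ can contribute only the single value $2p+2k+1$ beyond $P_1+P_1$; the singleton $\{2x\}$ contributes at most the single value $2p+2k+2$; and for $x+P_2$, if $q'\in P_2$ satisfies $q'<\max P_2$ then $x+q'=(p+k)+1+q'\le(p+k)+(q+\ell)=\max(P_1+P_2)$ while trivially $x+q'\ge\min(P_1+P_2)$, so $x+q'\in P_1+P_2$, and only the choice $q'=\max P_2$, giving the value $(p+k)+(q+\ell)+1$, can escape. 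Summing the three possibilities yields at most $3$ new sums. The degenerate situations --- $P_2$ empty, or $\min P_2\le\max P_1+1$ so that $x\in P_1\cup P_2$ already and no new sum appears --- only lower the count and deserve just a remark.

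I do not expect any genuine obstacle; the lemma is, as advertised, trivial. The one spot meriting a moment's care is the bookkeeping in part (2): one must confirm that $x+p'$ cannot leave $P_1+P_1$ unless $p'=\max P_1$ and that $x+q'$ cannot leave $P_1+P_2$ unless $q'=\max P_2$, both of which follow from the single observation that $x$ overshoots $\max P_1$ by exactly one, together with the facts that $p'\in P_1$ and $q'\in P_2$.
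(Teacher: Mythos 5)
Your proposal is correct and follows essentially the same route as the paper: item (1) by computing the sum set of an interval explicitly, and item (2) by splitting the new sums into the interactions of $\max P_1+1$ with $P_1$, with itself, and with $P_2$, and bounding each contribution by one. The only cosmetic difference is that you compare $x+P_2$ against the interval $P_1+P_2$, whereas the paper compares it against the translate $\max P_1+P_2$; both yield the same single candidate $\max P_1+\max P_2+1$.
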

\begin{proof}
We first prove item 1. Without loss of generality, assume $P_1=\{0,1,\ldots,n\}$ for some $n\ge 0$. Denote $Q_1 = P_1\cup \{n+1\}$. Then $P_1+P_1 = \{0,1,\ldots,2n\}$ and $Q_1 + Q_1 = \{0,1,\ldots,2n+2\}$. Clearly, $|Q_1+Q_1|-|P_1+P_1| = 2$. 

We proceed to prove item 2. New sums come from the interactions of $\max P_1+1$ with $P_1$, with $P_2$, and with itself. By item 1, the interactions of $\max P_1+1$ with $P_1$ and itself give at most $2$ new sums. We consider the interactions of $\max P_1+1$ with $P_2$. We have
$$(\max P_1+1+P_2)\backslash (\max P_1+P_2) = \{\max P_1+\max P_2+1\}.$$
Therefore, the interactions of $\{\max P_1+1\}$ with $P_2$ gives at most $1$ new sum. In total, we have at most $3$ new sums, as desired. 
\end{proof}
\section{Proof of Theorem \ref{maintheo}}
Because sum-dominance is preserved under affine transformations, without loss of generality, assume that $0=\min P_1\le \min P_2$ and $|P_1|\ge |P_2|$. Let $m_i$ and $M_i$ denote $\min P_i$ and $\max P_i$, respectively. Finally, we only consider $P_1\cap P_2 = \emptyset$ because if $P_1\cap P_2 \neq \emptyset$, $P_1\cup P_2$ is an arithmetic progression\footnote{Recall that $P_1$ and $P_2$ have the same common difference.}, which does not form a sum-dominant set by Corollary \ref{arithnotsumdominant}. Our proof considers $P_1$ as the original set and sees how $P_2
\rightarrow P_1$ changes the number of sums and differences. 

\subsection{Part I. $\mathbf{\max P_1 < \min P_2}$}

Let $k= \min P_2-\max P_1$. If $k=1$, $P_1\cup P_2$ is an arithmetic progression, not a sum-dominant set. We consider two cases corresponding to $k<1$ and $k>1$. 

\noindent \textbf{Case I.1:} $k < 1$. We consider $P_2\rightarrow P_1$. The set of new positive and distinct differences includes
$$k \ <\  k+1 \ <\  \cdots \ <\  k+|P_1|+|P_2|-2.$$
Hence, the number of new differences is at least $2(|P_1|+|P_2|-1)$. Now, we count the number of new sums. Consider $m_2 \rightarrow P_1$. We have at most $|P_1|+1$ new sums. Due to Lemma \ref{atmost3}, $m_2+j\rightarrow P_1\cup\{m_2,\ldots, m_2+j-1\}$ gives at most $3$ new sums for all $j\ge 1$. Therefore, $P_2\rightarrow P_1$ gives at most $$(|P_1|+1)+3(|P_2|-1) \ =\ |P_1|+3|P_2|-2$$ new sums. 

Because $|P_1|\ge |P_2|$, we have
$$2(|P_1|+|P_2|-1)\ \ge \ |P_1|+3|P_2|-2,$$ and so, we do not have a sum-dominant set. 

\noindent \textbf{Case I.2:} $k>1$. If $k$ is not a multiple of $1$, then with the same reasoning as Case I.1, we are done. If $k$ is a multiple of $1$, we consider two following subcases. 

\textit{Subcase I.2.1:} $k> \max P_1$. Then $m_2\rightarrow P_1$ gives $|P_1|$ new positive differences
    $$m_2-\max P_1\ <\ m_2 - \max P_1+1\ <\ \cdots \ < \ m_2$$
    while at most $|P_1|+1$ new sums. Due to Lemma \ref{atmost3}, $m_2+j\rightarrow P_1\cup\{m_2,\ldots, m_2+j-1\}$ gives at most $3$ new sums and at least $2$ new differences $\pm (m_2+j)$ for all $j\ge 1$. Therefore, $P_2\rightarrow P_1$ gives at most $|P_1|+1+3(|P_2|-1)$ new sums while at least $2|P_1|+2(|P_2|-1)$ new differences. Because $|P_1|\ge |P_2|$, the number of new differences is not smaller than the number of new sums, and so, $P_1\cup P_2$ is not sum-dominant. 
    
\textit{Subcase I.2.2:} $k\le \max P_1$. If $|P_2|\ge k$, we are done due to item 2 Lemma \ref{mcdonald}. So, we consider $|P_2| \le k-1$. Consider $m_2\rightarrow P_1$. By \cite[Proposition 7]{Chu}, $m_2\rightarrow P_1$ gives $2k$ new differences and $k+1$ new sums. Due to Lemma \ref{atmost3}, $m_2+j\rightarrow P_1\cup\{m_2,\ldots, m_2+j-1\}$ gives at most $3$ new sums and $2$ new differences $\pm (m_2+j)$ for all $j\ge 1$. The total number of new sums is at most $k+1+3(|P_2|-1)$, while the number of new differences is at least $2k+2(|P_2|-1)$. We have
$$2k+2(|P_2|-1)-(k+1+3(|P_2|-1)) \ =\ k-|P_2| \ \ge\ 1.$$
Hence, $P_1\cup P_2$ is not sum-dominant. 

\subsection{\textbf{Part II. $\mathbf{\max P_1 > \min P_2}$}}

If $m_2-1/2\in \mathbb{Z}$, we consider $2(P_1\cup P_2)$. Because the difference between any two consecutive numbers in increasing order is either $1$ or $2$, by item 1 Lemma \ref{mcdonald}, we do not have a sum-dominant set. Hence, we assume that $m_2-1/2\notin \mathbb{Z}$. Suppose that $n<m_2<n+1$ for some $n\in P_1$. The following are new and pairwise distinct positive differences from $m_2\rightarrow P_1$
\begin{align*}
    &m_2-n\ <\ m_2-(n-1)\ <\ \cdots\ <\ m_2-0,\\
    &n+1-m_2\ <\ n+2-m_2\ <\ \cdots\ <\ \max P_1-m_2.
\end{align*}
Hence, we have at least $2|P_1|$ new differences. On the other hand, $m_2\rightarrow P_1$ gives at most $|P_1|+1$ new sums. Due to Lemma \ref{atmost3}, $m_2+j\rightarrow P_1\cup\{m_2,\ldots, m_2+j-1\}$ gives at most $3$ new sums and at least $2$ new differences $\pm (m_2+j)$ for all $j\ge 1$.  
Hence, the total number of new sums as a result of $P_2\rightarrow P_1$ is at most $$|P_1|+1 +3(|P_2|-1) \ =\  |P_1|+3|P_2|-2,$$
while the number of new differences is at least $$2|P_1|+2(|P_2|-1) \ =\ 2|P_1|+2|P_2|-2.$$
Because $|P_1|\ge |P_2|$, we have $$|P_1|+3|P_2|-2\ \le \ 2|P_1|+2|P_2|-2.$$ Therefore, $P_1\cup P_2$ is not sum-dominant. 
 
We finish our proof. 
\section{Proof of Theorem \ref{notallset}}
\begin{lem}\label{lowerboundof7}
For $m\ge 9$, the set
\begin{align*}K &\ =\ \{0,1,\ldots,m+7\}\backslash \{3,5,6,m+1,m+2,m+3,m+5\}\\
&\ =\ \{0,1,2,4\}\cup\{7,8,\ldots,m\}\cup\{m+4,m+6,m+7\}\end{align*}
is sum-dominant. Note that $K$ is a sum-dominant subset of $\{0,1,\ldots,m+7\}$ after we discard $7$ numbers from the arithmetic progression. 
\end{lem}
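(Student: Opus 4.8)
The plan is to compute $|K+K|$ and $|K-K|$ essentially explicitly, exploiting the block structure $K = A\cup B\cup C$ with $A = \{0,1,2,4\}$, $B = \{7,8,\dots,m\}$ (the long arithmetic progression in the middle), and $C = \{m+4,m+6,m+7\}$. I would evaluate each of the six block-pair sets $A+A$, $A+B$, $A+C$, $B+B$, $B+C$, $C+C$, and likewise the corresponding differences. The pieces involving the long block $B$ come out to be genuine intervals of consecutive integers---because $m\ge 9$ makes $B$ long enough that the several pieces one obtains overlap---while those involving only the small blocks $A$ and $C$ are short sets one simply lists.

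For the sumset I would record $A+A = \{0,\dots,6\}\cup\{8\}$, $A+B = \{7,\dots,m+4\}$, $A+C = \{m+4,\dots,m+11\}$, $B+B = \{14,\dots,2m\}$, $B+C = \{m+11,\dots,2m+7\}$, and $C+C = \{2m+8\}\cup\{2m+10,2m+11,2m+12,2m+13,2m+14\}$. Checking that (for $m\ge 9$) the first five of these telescope without gaps to $\{0,1,\dots,2m+7\}$, I would conclude
$$K+K \ =\ \{0,1,\dots,2m+8\}\ \cup\ \{2m+10,2m+11,2m+12,2m+13,2m+14\},$$
the single missing value being $2m+9$. The reason $2m+9\notin K+K$ is clean: if $k_1+k_2 = 2m+9$ with $k_1,k_2\in K$ and $k_1\le m+7$, then $k_2\ge m+2$, which forces both summands into $C$; and $2m+9\notin C+C$. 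Hence $|K+K| = 2m+14$.

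For the difference set, by symmetry I only need the positive part. There $A-A$ contributes $\{1,2,3,4\}$, $B-A$ contributes $\{3,\dots,m\}$, $C-A$ contributes $\{m\}\cup\{m+2,\dots,m+7\}$, while $B-B$, $C-B$, $C-C$ contribute nothing new; so the positive part of $K-K$ is $\{1,2,\dots,m\}\cup\{m+2,\dots,m+7\}$. The only missing positive value is $m+1$: it is too large to be a difference within $B$, within $C$, between $C$ and $B$, or between $B$ and $A$; and a representation $m+1 = c-a$ with $a\in A$ would require $c\in\{m+1,m+2,m+3,m+5\}$, but these are precisely four of the seven numbers we deleted and none of them lies in $C$. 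Therefore $|K-K| = 2(m+6)+1 = 2m+13$. Since $2m+14 > 2m+13$, the set $K$ is sum-dominant.

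The only step that needs genuine care is verifying that the hypothesis $m\ge 9$ is exactly what makes all the interval unions above close up with no spurious gap---e.g.\ that $A+B$ truly equals $\{7,\dots,m+4\}$, that $B+B$ and then $B+C$ attach to the running interval without leaving a hole, and the analogous claims on the difference side; for smaller $m$ the short blocks $A$ and $C$ are no longer shielded by a long enough progression and the tidy description of $K\pm K$ can break down. Everything else is a bounded finite verification. (Alternatively one could start from the symmetric set $L = \{0,1,2,4\}\cup\{7,\dots,m\}\cup\{m+3,m+5,m+6,m+7\}$, which is symmetric about $m+7$ and hence balanced by Proposition~\ref{sym}, and track how replacing its right block by that of $K$ shifts the two cardinalities; but the direct count above seems the most transparent.)
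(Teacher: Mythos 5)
Your proposal is correct and follows essentially the same route as the paper: the paper's proof simply asserts that $K+K=\{0,1,\ldots,2m+14\}\backslash\{2m+9\}$ and $K-K=\{\pm0,\ldots,\pm(m+7)\}\backslash\{\pm(m+1)\}$ and concludes $|K+K|-|K-K|=1$, while you supply the block-by-block verification of those two identities (all of which checks out for $m\ge 9$). The only difference is that you make explicit the computation the paper leaves as an ``observe.''
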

\begin{proof}
Observe that $K-K = \{\pm 0,\pm 1,\ldots,\pm (m+7)\}\backslash \{\pm (m+1)\}$, while $K+K = \{0,1,\ldots,2m+14\}\backslash \{2m+9\}$. Hence, $|K+K|-|K-K| = 1$. 
\end{proof}

We now prove Theorem \ref{notallset}.
Fix $n\ge 16$. Let $N$ be the cardinality of the largest sum-dominant subset(s) of $\{0,1,\ldots,n-1\}$. Lemma \ref{lowerboundof7} proves the lower bound for $N$ in Theorem \ref{notallset}; that is, $N\ge n-7$. We proceed to show that $N\le n-4$. 

If $N=n$, we have the arithmetic progression $\{0,1,\ldots,n-1\}$, which is not sum-dominant. 

If $N=n-1$, we do not have a sum-dominant set due to item 1 Lemma \ref{mcdonald}.

If $N=n-2$, we have two cases. If the two missing numbers are not next to each other, we do not have a sum-dominant set due to item 1 Lemma \ref{mcdonald}. If the two missing numbers are next to each other, we do not have a sum-dominant set due to Theorem \ref{maintheo}.

If $N=n-3$, we have three cases. 
\begin{enumerate}
\item Case 4.1: If the three missing numbers are consecutive, then we do not have a sum-dominant set due to Theorem \ref{maintheo}.

\item Case 4.2: If no two numbers are next to each other, then we do not have a sum-dominant set due to item 1 Lemma \ref{mcdonald}.

\item Case 4.3: Two numbers are next to each other, while the other is not next to any of these numbers. Let the two numbers that are next to each other be $k$ and $k+1$ for some $k\ge 0$. Without loss of generality, assume that the third number is $k+p$ such that $k+p>k+2$.
\begin{enumerate}
\item If $k=0$, we have the set $\{2,3,\ldots,n-1\}\backslash \{k+p\}$, which is not sum-dominant due to item 1 Lemma \ref{mcdonald}.
\item If $k+p = n-1$, we are back to the case $N=n-2$.
\item Suppose that $k>0$ and $k+p<n-1$. We have all differences in $\{0,1,\ldots,n-1\}\backslash\{k,k+1,k+p\}$ by looking at $\{0,1,\ldots,n-1\}\backslash\{k,k+1,k+p\}-0$. If we do not have any missing differences, then we are done. 

If $k=1$, because $n\ge 16$ and we miss only $3$ numbers, it must be that we have three consecutive numbers in our set. So, we have differences of $1$ and $2$, and so, $k$ and $k+1$ are in the difference set. Hence, we miss at most $2$ differences, which are $\pm (k+p)$. However, we also miss at least $2$ sums, which are $1$ and $2$. Therefore, we do not have a sum-dominant set. 

If $k=2$, then $1$ is in our set. We have $k+p$ by looking at $(k+p+1)-1$ and $k+1$ by looking at $(k+2)-1$. Because $n\ge 16$ and we miss only $3$ numbers, it must be that we have three consecutive numbers in our set. So, we have a difference of $2$, and so, $k$ is in the difference set. We are done. 

If $k>2$, then $1$ and $2$ are in our set. We have $k+p$ by looking at $(k+p+1)-1$, $k+1$ by looking at $(k+2)-1$, and $k$ by looking at $(k+2)-2$. We are done. 

\end{enumerate}
\end{enumerate}

\section{Proof of Theorem \ref{3-partition}}
We will use the construction discussed in \cite[Theorem 1.1]{CLMS1} to partition $\{1,2,\ldots,r\}$ into $3$ sum-dominant subsets. Following the construction, we fix $n = k = 20$ and set
\begin{align*}
    L_1 &\ =\ \{1,2,3,4,8,9,11,13,14,15,20\},\\
    R_1 &\ =\ \{21,26,27,28,31,33,37,38,39,40\},\\
    L_2 &\ =\ \{5,6,7,10,12,16,17,18,19\},\\
    R_2 &\ =\ \{22,23,24,25,29,30,32,34,35,36\}.
\end{align*}
Note that in \cite[Theorem 1.1]{CLMS1}, $A_1 = L_1\cup R_1$ and $A_2 = L_2\cup R_2$. 
Pick $m\ge 21$. Set
\begin{align*}
    R'_1 &\ =\ R_1+m+84,\\
    R'_2 &\ =\ R_2 + m+84,\\
    O_{11} &\ =\ \{24\}\cup \{25,27,29,\ldots,61\}\cup\{62\},\\
    O_{12} &\ =\ \{63+m\}\cup \{64+m, 66+m,68+m,\ldots,100+m\}\cup\{101+m\},\\
    O_{21} &\ =\ \{21,22,23\}\cup \{26,28,30,\ldots,60\}\cup \{63,64,65\},\\
    O_{22} &\ =\ \{60+m, 61+m, 62+m\}\cup \{65+m,67+m,\ldots,99+m\}\\
    &\ \cup\ \{102+m,103+m,104+m\}.
\end{align*}
Let $M_1\subseteq \{66,67,\ldots,59+m\}\backslash \{66,68,69,70,73,77,78,80\}$ such that within $M_1$, there exists a sequence of pairs of consecutive elements, where consecutive pairs are not more than $39$ apart and the sequence starts with a pair in $\{66,67,\ldots,101\}$ and ends with a pair in $\{24+m,25+m,\ldots,59+m\}$. Let $M_2\subseteq \{66,67,\ldots,59+m\}$ such that within $M_2$, there exists a sequence of triplets of consecutive elements, where consecutive triplets are not more than $40$ apart and the sequence starts with a triplet in $\{66,67,\ldots,105\}$ and ends with a triplet in $\{20+m,21+m,\ldots,59+m\}$. Also, $M_1\cap M_2 = \emptyset$ and $M_1\cup M_2 = \{66,67,\ldots,59+m\}\backslash \{66,68,69,70,73,77,78,80\}$. Then
\begin{align*}
    A'_1 &\ =\ L_1\cup O_{11} \cup M_1 \cup O_{12} \cup R'_1\\
    A'_2 &\ =\ L_2 \cup O_{21}\cup M_2\cup O_{22} \cup R'_2
\end{align*}
are both sum-dominant and along with $S =  \{66,68,69,70,73,77,78,80\}$ partition $\{1,124+m\}$. 
\begin{exa}\label{upperboundR}
Let $m = 21$. Set 
\begin{align*}
M_1 &\ =\ \{71,72\} \\
M_2 &\ =\ \{67,74,75,76,79\}.
\end{align*}
We partition $\{1,145\}$ into three following sum-dominant sets 
\begin{align*}
 A'_1 &\ =\ L_1\cup O_{11} \cup M_1 \cup O_{12} \cup R'_1\\
 &\ =\ \{1,2,3,4,8,9,11,13,14,15,20,24\}\cup \{25,27,29,\ldots,61\}\\&\ \cup\ \{62,71,72,84\}\cup\{85,87,\ldots,121\}\\ &\ \cup\ \{122,126,131,132,133,136,138,142,143,144,145\}\\
 &\ \mbox{ with } |A'_1+A'_1|-|A'_1-A'_1| = 2,\\
 A'_2 &\ =\ L_2 \cup O_{21}\cup M_2\cup O_{22} \cup R'_2\\
 &\ =\ \{5,6,7,10,12,16,17,18,19, 21,22,23\}\cup \{26,28,30,\ldots,60\}\\
 &\ \cup\ \{63,64,65,67,74,75,76,79,81,82,83\}\cup\{86,88,\ldots,120\}\cup\{123,124,125\}\\
 &\ \cup\ \{127,128,129,130,134,135,137,139,140,141\}\\
 &\ \mbox{ with } |A'_2+A'_2|-|A'_2-A'_2| = 2,\\
 S &\ =\ \{66,68,69,70,73,77,78,80\} \mbox{ with } |S+S|-|S-S| = 1.
\end{align*}
\end{exa}
Example \ref{upperboundR} proves the upper bound of $145$ for $R$ in our Theorem \ref{3-partition}.
\section{Future Research}
We end with a list of questions for future research.
\begin{itemize}
    \item Is Conjecture \ref{discard7} correct?
    \item Is Conjecture \cite[Conjecture 17]{Chu} correct? 
    \item Is it true that for every fixed $k\ge 2$, as $r\rightarrow \infty$, the proportion of $k$-decompositions of $\{1,2,\ldots,r\}$ into sum-dominant subsets is bounded below by a positive constant? 
\end{itemize}

\vskip 20pt
{\smallit Department of Mathematics, University of Illinois at Urbana-Champaign, Urbana, IL 61820, USA}\\
{\tt chuh19@mail.wlu.edu}\\ 

\end{document}